\documentclass{article}


\usepackage[preprint]{neurips_2025}




\usepackage[utf8]{inputenc} 
\usepackage[T1]{fontenc}    
\usepackage{hyperref}       
\usepackage{url}            
\usepackage{booktabs}       
\usepackage{amsfonts}       
\usepackage{nicefrac}       
\usepackage{microtype}      
\usepackage{xcolor}         
\usepackage{algorithm}
\usepackage{algpseudocode}
\usepackage{amsmath}
\usepackage{amssymb}
\usepackage{amsthm}
\usepackage{setspace}

\usepackage{subcaption}
\usepackage{graphicx} 

\newtheorem{theorem}{Theorem}  
\newtheorem{lemma}{Lemma}
\newtheorem{definition}{Definition}

\title{An Alternating Approach to Approximate Dynamic Programming}

%

\author{%
  Di Zhang \\
  Department of Industrial and System Engineering\\
  University of Southern California\\
  Los Angeles, CA 90089 \\
  \texttt{dzhang22@usc.edu} \\
}

\begin{document}

\maketitle

\begin{abstract}
   This paper gives a new approximate dynamic programming (ADP) method to solve large-scale Markov decision programming (MDP) problem. In comparison with many classic ADP methods which have large number of constraints, we formulate an alternating ADP (AADP) which have both small number of constraints and small number of variables by approximating the decision variables (instead of the objective functions in classic ADP) and write the dual of the exact LP. Also, to get the basis functions, we use kernel approximation instead of empirical choice of basis functions, which can efficiently learn nonlinear functions while retaining the expressive power. By treating option pricing as an large-scale MDP problem, we apply the AADP method to give an empirical proof that American call option will not be exercised earlier if the underlying stock has no dividend payment, which is a classic result proved by Black-Scholes model. We also make comparison of pricing options in high-dimensional with some benchmark option pricing papers which use the classic ADP to give upper and lower bound of the option price.
\end{abstract}

\section{Introduction} Markov decision programming (MDP) are wildly used in different areas like health care~\cite{zhang2024state, zhang2024optimizing}, finance~\cite{lin2024economic}and optimal control~\cite{bertsekas1996dynamic}. There are many approaches to solve MDP problems, including but not limited to dynamic programming~\cite{bertsekas1996dynamic}, linear programming~\cite{de2003linear}, reinforcement learning~\cite{peng2024graph}, stochastic programming~\cite{zhang2024stochastic,zhang2024sampling}, and deep learning~\cite{dan2024evaluation,dan2024image}. However, the curse of dimensionality gives rise to prohibitive computational requirements that render infeasible the exact solution of large-scale
MDP~\cite{de2003linear}. One approach to deal with the problem is ADP, which try to approximate the objective function using a collection of basis functions. Some examples of this approach include De Farias and Van Roy~\cite{de2003linear}, Adelman and Mersereau \cite{adelman2008relaxations}, Desai et al.~\cite{desai2012pathwise} and Bertsimas and Misic~\cite{bertsimas2016decomposable} ,where they studied efficient methods based on linear programming for approximating solutions to such problems. These approach “fit” a linear combination of pre-selected basis functions to the dynamic programming cost-to-go function. However, although the number of variables is reduced, the number of constraints remains as large as in the exact LP. In this paper, we formulate an AADP which have both small number of constraints and small number of variables by approximating the decision variables (instead of the objective functions) and writing the dual of the exact LP. A detailed analysis is in section 2.

\noindent Another issue of ADP is the choice of basis functions. It mainly depends on the empirical results. For example, De Farias and Van Roy~\cite{de2003linear} uses polynomial functions, Desai, et al.~\cite{desai2012pathwise} uses $n + 2$ basis functions, and Mu Lin et al.~\cite{lin2024economic} employ the Hyperbolic Absolute Risk Aversion (HARA) Utility Function. Nevertheless, reasonably accurate knowledge of structure the cost-to-go function may be difficult in many problems. In this project, we will use kernel approximation to get the basis functions, which is highly related to Random Fourier Features: in Gaussian kernel approximation, replacing the random Gaussian matrix by a properly scaled random orthogonal matrix significantly decreases kernel approximation error~\cite{yu2016orthogonal}. With accurate kernel approximation, efficient nonlinear functions can be trained in the transformed space while retaining the expressive power of nonlinear methods~\cite{joachims2006training} . Further discussion will be shown in section 3 and some theoretical results are proved in section 4.   

\noindent Finally, a nice application of ADP is large-scale optimal stopping problem (since this problem can be formulated as MDP) such as option pricing. For example, it can be shown that American call option will not be exercised earlier if the underlying stock has no dividend payment. The classic proof is given by Black-Scholes model~\cite{black1973pricing}. In this report, we will using our AADP method to give the same empirical result. Also,  Desai et al.~\cite{desai2012pathwise} produce upper and lower bounds on the optimal value (the ‘price’) of a high-dimensional option pricing problem. We can use our method to make comparison with their results. The specific experimental comparison can be found in section 5.

\section{Problem Setup}
\noindent Let $S\times A$ be the state-action space and $\pi$ be a policy. Consider a constrained discounted MDP problem of the following kind:

\begin{equation} \label{MDP}
    \begin{aligned}
        & \max_{\pi} E[\sum_{t=0}^{\infty} \gamma^t r(x_t,u_t)],\\
        & s.t. \ E[\sum_{t=0}^{\infty} \gamma^t c(x_t,u_t)] \leq C
    \end{aligned}
\end{equation}

\noindent where $\gamma \in (0,1)$ is the discounted factor, $r(\cdot,\cdot): S\times A \to \mathbb{R}$ is the reward and $c(\cdot,\cdot): S\times A \to \mathbb{R}$ is the cost. It can be shown that (\ref{MDP}) can be rewritten as an LP:

\begin{equation} \label{DMDP}
    \begin{aligned}
        & \max_{\mu} \sum_{x,u} \mu(x,u) r(x,u)\\
        & s.t. \ \sum_{u} \mu(x,u) = \nu(x) + \gamma \sum_{x',u'}P_{x'x}(u')\mu(x',u') \quad \forall x,\\
        & \quad \ \ \sum_{x,u} c(x,u) \mu(x,u) \leq C\\
        & \quad \ \ \mu(x,u) \geq 0 \quad \forall x,u.
    \end{aligned}
\end{equation}

\noindent Note that (\ref{DMDP}) has $|S|\times|A|$ variables and $|S|\times|A|+|A|$ constraints, where $|S|$ is the size of state space and $|A|$ is the size of action space. Our goal is to reduce the number of variables and constraints so that problem (\ref{DMDP}) can be (approximately) solved efficiently.\\

\noindent To reduce the number of variables, let $\mu(x,u)=\sum_{i=1}^{k}\theta_i\phi_i(x,u) \geq 0$. Then (\ref{DMDP}) can be expressed as

\begin{equation} \label{AMDP}
    \begin{aligned}
        & \max_{\theta} \sum_{x,u} \sum_{i=1}^{k}\theta_i\phi_i(x,u) r(x,u)\\
        & s.t. \ \sum_{u} \sum_{i=1}^{k}\theta_i\phi_i(x,u) = \nu(x) + \gamma \sum_{x',u'}P_{x'x}(u')\sum_{i=1}^{k}\theta_i\phi_i(x',u') \quad \forall x.\\
        & \quad \ \ \sum_{x,u} c(x,u)\sum_{i=1}^{k}\theta_i\phi_i(x,u)  \leq C\\
        & \quad \ \ \theta_i \geq 0 \quad \forall i
    \end{aligned}
\end{equation}

\noindent Note that although we have reduced the number of variables to $k$, we still have $|S|$ constraints. Thus, to reduce the number of constraints, we will write the dual of problem $(\ref{AMDP})$ as 

\begin{equation} \label{DAMDP}
    \begin{aligned}
        & \min_{V,\omega} \sum_{x} V(x) \nu(x) + \omega \cdot C\\
        & s.t. \ -\sum_{x,u}\phi_i(x,u)r(x,u) + \sum_{x}V(x)\sum_{u}\phi_i(x,u) + \omega \sum_{x,u} c(x,u)\phi_i(x,u) \\
        & \geq \gamma \sum_{x,x',u'} V(x)P_{x'x}(u')\phi_i(x',u') \quad \forall i.
    \end{aligned}
\end{equation}

\noindent Let $V(x)=\sum_{j=1}^l \beta_j \psi_j(x)$. Then (\ref{DAMDP}) can be formulated as 

\begin{equation} \label{ADAMDP}
    \begin{aligned}
        & \min_{\beta} \sum_{x} \sum_{j=1}^l \beta_j \psi_j(x) \nu(x) + \omega \cdot (C+\varepsilon)\\
        & s.t. \ -\sum_{x,u}\phi_i(x,u)r(x,u) + \sum_{x,j} \beta_j \psi_j(x)\sum_{u}\phi_i(x,u) + \omega \sum_{x,u} c(x,u)\phi_i(x,u)\\
        & \geq \gamma \sum_{x,x',u',j}  \beta_j \psi_j(x)P_{x'x}(u')\phi_i(x',u') \quad \forall i.
    \end{aligned}
\end{equation}

\noindent Note that problem (\ref{ADAMDP}) has $l$ variables and $k$ constraints. Thus, we can approximately solve (\ref{DMDP}) using (\ref{ADAMDP}).  Solve linear Programming (\ref{ADAMDP}), we can get the dual variables $\theta_{1:k}$ and calculate 

$$\mu(x,u)=\sum_{i=1}^{k} \theta_i \phi_i(x,u). $$

\noindent Note that $\mu$ is a measure, and not necessarily a probability measure. To get a probability measure, we can easily normalize it as:

$$\Bar{\mu}(x,u)=\frac{\sum_{i=1}^{k} \theta_i \phi_i(x,u)} {\sum_{x,u} \sum_{i=1}^k \theta_i\phi_i(x,u)}. $$

\noindent Finally, the policy can be derived as:
$$\Bar{\pi}(u|x)=\frac{\Bar{\mu}(x,u)}{\sum_{u} \Bar{\mu}(x,u)}.$$
\hspace{2cm}

\section{Assumptions and Algorithm} \label{algorithm}

The analysis is based on the following assumptions: \\

A1: We can sample a finite S', which is a subset of S. \\

A2: There exists an oracle to get $P_{x',x}(u)$ for every $x,x' \in S'$ and $u \in U$. This assumption implies that the underlying Markov process are known for every $u \in U$. \\

A3: The matrices $P_{x',x}(u)$ are sparse. This assumption indicates that we can only transfer from state $x$ to only a few other states $x'$.\\

\subsection{Alternating ADP (AADP) Algorithm}

With the assumptions above, we can summarize the Alternating AMDP as follows.

\begin{algorithm}[H]
\setstretch{1.6}
\caption{AADP Algorithm}
\label{CGDM}
\begin{algorithmic}[1]

\State Given discount factor $\gamma$, approximation parameter $k$ and $l$, samples state space $S'$ and action space $U$.

\State Initialize $\nu(x)$ for every $x \in S'$.

\State Calculate $\phi_{1:k}(x,u)$ and $\psi_{1:l}(x)$ for every $x \in S'$ and $u \in U$.

\State Calculate $P_{x'x}(u)$ for every $x, x' \in S'$ and $u \in U$.

\State Solve linear Programming (\ref{ADAMDP}) and get the dual variables $\theta_{1:k}$.

\State Calculate $$\Bar{\mu}(x,u)=\frac{\sum_{i=1}^{k} \theta_i \phi_i(x,u)} {\sum_{x,u} \sum_{i=1}^k \theta_i\phi_i(x,u)} $$

\State Output policy $$\Bar{\pi}(u|x)=\frac{\Bar{\mu}(x,u)}{\sum_{u} \Bar{\mu}(x,u)}$$
\end{algorithmic}
\end{algorithm}

\subsection{Kernelized AADP Algorithm}

In this section, we will present how to get basis functions $\phi_i$'s by kernel approximation based on Mercer's theorem and a theorem for shift-invariant kernel. Also, we will give a efficient approach to generate the approximated kernel mapping.

\begin{theorem}(Mercer's)
$K(x,y)=\sum_{j=1}^{\infty}\lambda_j\phi_j(x)\phi_j(y)$, where $\lambda_j \geq 0$, and $(\phi_j)_{j \geq 0}$ are orthonormal basis of $L_2$.
\end{theorem}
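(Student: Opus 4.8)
The plan is to realize $K$ as the kernel of an integral operator and invoke the spectral theorem for compact self-adjoint operators, then upgrade the resulting $L_2$ expansion to the pointwise form claimed. Throughout I work under the standing hypotheses that make Mercer's theorem hold but which the statement leaves implicit: the domain $X$ is compact, and $K$ is continuous, symmetric, and positive semidefinite, meaning $\iint K(x,y) f(x) f(y)\, dx\, dy \ge 0$ for every $f \in L_2(X)$. First I would define the integral operator $T_K : L_2(X) \to L_2(X)$ by $(T_K f)(x) = \int_X K(x,y) f(y)\, dy$. Continuity of $K$ on the compact set $X \times X$ makes $K$ square-integrable, so $T_K$ is Hilbert--Schmidt and therefore compact; symmetry of $K$ makes $T_K$ self-adjoint.

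Next I would apply the spectral theorem for compact self-adjoint operators to obtain an orthonormal basis $(\phi_j)_{j \ge 1}$ of $L_2(X)$ consisting of eigenfunctions of $T_K$, with real eigenvalues $\lambda_j \to 0$. To see $\lambda_j \ge 0$, I would compute $\lambda_j = \langle T_K \phi_j, \phi_j\rangle = \iint K(x,y)\phi_j(x)\phi_j(y)\, dx\, dy$, which is nonnegative precisely by positive semidefiniteness. The eigenfunctions associated to nonzero eigenvalues may moreover be taken continuous, since $\phi_j = \lambda_j^{-1} T_K \phi_j$ and $T_K$ maps $L_2(X)$ into continuous functions under the continuity of $K$.

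The substantive step is to establish the expansion $K(x,y) = \sum_{j} \lambda_j \phi_j(x)\phi_j(y)$ with convergence that is uniform and absolute, not merely in $L_2$. I would set $K_n(x,y) = \sum_{j=1}^n \lambda_j \phi_j(x)\phi_j(y)$ and $R_n = K - K_n$, and argue that each $R_n$ is again a continuous positive-semidefinite kernel; hence $R_n(x,x) \ge 0$, and the diagonal values $R_n(x,x) = K(x,x) - \sum_{j=1}^n \lambda_j \phi_j(x)^2$ decrease monotonically to $0$ pointwise. Dini's theorem then promotes this to uniform convergence of $R_n(x,x) \to 0$ on the compact $X$, and the off-diagonal case follows from the Cauchy--Schwarz-type bound $|R_n(x,y)|^2 \le R_n(x,x)\, R_n(y,y)$ valid for positive-semidefinite kernels, giving $\sup_{x,y}|R_n(x,y)| \to 0$. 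I expect this final upgrade to be the \textbf{main obstacle}: the $L_2$ identity is immediate from the spectral decomposition, whereas verifying that the remainder $R_n$ remains a genuine positive-semidefinite kernel with diagonal tending pointwise to zero — the exact hypotheses that license Dini's theorem — is the delicate part and is where continuity of $K$ and compactness of $X$ are genuinely used.
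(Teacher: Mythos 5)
The paper never proves this statement: Mercer's theorem is quoted there as a classical result, imported to justify the kernel feature construction in Section 3, so there is no proof of the paper's to compare yours against. Your route --- realize $K$ as a Hilbert--Schmidt integral operator, apply the spectral theorem for compact self-adjoint operators, get $\lambda_j \ge 0$ from positive semidefiniteness, and upgrade the $L_2$ expansion to uniform pointwise convergence via Dini plus the Cauchy--Schwarz bound $|R_n(x,y)|^2 \le R_n(x,x)\,R_n(y,y)$ --- is the standard classical proof, and you are right to make explicit the hypotheses (compact domain; continuous, symmetric, positive-semidefinite kernel) that the paper's statement suppresses.

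There is one genuine gap, and it sits exactly where you located the ``main obstacle'': you assert that $R_n(x,x)$ decreases monotonically \emph{to zero} pointwise, but you never prove the limit is zero. Positive semidefiniteness of $R_n$ gives monotone decrease and $R_n(x,x)\ge 0$, hence a nonnegative pointwise limit; Dini's theorem cannot be invoked until that limit is identified as the continuous function $0$. The missing argument is the completeness step: fix $x$; from $\sum_{j\le n}\lambda_j\phi_j(x)^2 \le K(x,x)$ the series $\tilde K(x,y)=\sum_j \lambda_j\phi_j(x)\phi_j(y)$ converges absolutely and uniformly in $y$, hence defines a continuous function of $y$; then for every $j$,
\begin{equation*}
\int \bigl(K(x,y)-\tilde K(x,y)\bigr)\phi_j(y)\,dy \;=\; \lambda_j\phi_j(x)-\lambda_j\phi_j(x) \;=\; 0,
\end{equation*}
so $K(x,\cdot)-\tilde K(x,\cdot)$ is orthogonal to an orthonormal basis of $L_2$ and therefore vanishes in $L_2$; continuity of both functions upgrades this to $K(x,y)=\tilde K(x,y)$ for all $y$, and taking $y=x$ gives $R_n(x,x)\to 0$. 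Note this is the one place where completeness of $(\phi_j)$ --- including an orthonormal basis of the null space of $T_K$, not just the eigenfunctions of nonzero eigenvalue --- is genuinely used; with that step inserted, your Dini/Cauchy--Schwarz finish goes through as written.
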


\noindent A kernel is called shift-invariant kernel if $K(x,y)=\psi(x-y)$. For example, RBF kernel is shift-invariant since $K_{RBF}(x,y)=e^{-\frac{||x-y||^2}{2\sigma^2}}$.

\begin{theorem}
Every shift-invariant kernel can be written as $K(x,y)=\int_{\mathbb{R}^n}p(w)e^{i\omega^T(x-y)}dw=E[e^{i\omega^T(x-y)}]$ for some density $P$ related to some probability distribution $F$ on $\mathbb{R}^n$.
\end{theorem}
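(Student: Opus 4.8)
The plan is to recognize this statement as Bochner's theorem, which characterizes continuous positive-definite functions as Fourier transforms of finite non-negative measures. The first step is to record what ``kernel'' must mean here: for $K(x,y)=\psi(x-y)$ to be a legitimate (symmetric, positive semidefinite) kernel, the profile $\psi$ must be a continuous positive-definite function, i.e.\ for every finite set of points $x_1,\dots,x_m\in\mathbb{R}^n$ and complex scalars $c_1,\dots,c_m$ one has $\sum_{j,k} c_j\overline{c_k}\,\psi(x_j-x_k)\ge 0$. (The RBF kernel is the running example, and continuity is immediate there.) Normalizing so that $\psi(0)=1$ will later let us interpret the resulting measure as a probability distribution $F$ with density $p$.

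Next I would dispatch the easy direction, which also motivates the formula. If $\psi(t)=\int_{\mathbb{R}^n} e^{i\omega^T t}\,dF(\omega)$ for a probability measure $F$, then
\[
\sum_{j,k} c_j\overline{c_k}\,\psi(x_j-x_k)=\int_{\mathbb{R}^n}\Bigl|\sum_{j} c_j e^{i\omega^T x_j}\Bigr|^2\,dF(\omega)\ge 0,
\]
so $\psi$ is automatically positive definite; this confirms that any such representation does yield a valid kernel, and it is the identity the later Random Fourier Feature construction relies on.

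The substance is the converse: given continuous positive-definite $\psi$, produce the measure $F$. I would follow the classical regularization argument. For $\sigma>0$ define the candidate density
\[
p_\sigma(\omega)=\frac{1}{(2\pi)^n}\int_{\mathbb{R}^n} \psi(t)\,e^{-i\omega^T t}\,e^{-\sigma\|t\|^2/2}\,dt,
\]
the inverse Fourier transform of a Gaussian-damped $\psi$. Using positive-definiteness of $\psi$ (tested against the Gaussian) one shows $p_\sigma(\omega)\ge 0$ for every $\omega$, while $\int p_\sigma\,d\omega=\psi(0)$ stays bounded. Thus $dF_\sigma=p_\sigma\,d\omega$ is a family of finite non-negative measures of uniformly bounded mass; by Helly selection / Prokhorov compactness one extracts a weak limit $F$ as $\sigma\to 0$, and the inversion formula passes to the limit to give $\psi(t)=\int e^{i\omega^T t}\,dF(\omega)$. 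Dividing by $\psi(0)$ turns $F$ into a probability distribution and $p_\sigma$ into the claimed density $p$.

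The main obstacle is this converse direction, and within it two points need care: first, verifying $p_\sigma\ge 0$ rather than merely real, which is exactly where positive-definiteness is genuinely used and not just symmetry; and second, justifying the limit $\sigma\to 0$, where tightness of $\{F_\sigma\}$ must be established to rule out escape of mass to infinity before invoking weak convergence. The forward direction and the final normalization are routine by comparison.
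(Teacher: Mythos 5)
The paper itself offers no proof of this statement: it is quoted as a known result (Bochner's theorem, the foundation of Random Fourier Features) and used immediately afterward to justify the approximation $\hat{K}(x,y)=\frac{1}{m}\sum_i e^{i\omega_i^T(x-y)}$. So there is no paper proof to match yours against; what you have written is a self-contained sketch of the classical argument, and as a sketch it is sound. Your easy direction (positive definiteness of any Fourier--Stieltjes transform of a probability measure) is exactly the identity the paper's feature construction in equation (\ref{kapp}) relies on, and your converse via Gaussian regularization, uniform mass bound $\int p_\sigma = \psi(0)$, and weak compactness is the standard textbook route. You also correctly isolate the two places where the real work lies: extending positive definiteness from finite sums to integrals against test functions (to get $p_\sigma\ge 0$), and tightness of $\{F_\sigma\}$ as $\sigma\to 0$ (typically dispatched by a L\'evy-continuity-type estimate, since $\psi$ is continuous at $0$). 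Those steps are flagged rather than carried out, so this is an outline rather than a complete proof, but neither step fails.

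One substantive caveat: your construction delivers a probability \emph{measure} $F$ as a weak limit, not a density $p$, and that is unavoidable --- the statement as printed in the paper is too strong. A shift-invariant kernel such as $\psi(t)=\cos(\omega_0^T t)$ (or $\psi\equiv 1$) corresponds to point masses, which admit no density, so ``$K(x,y)=\int p(\omega)e^{i\omega^T(x-y)}d\omega$'' cannot hold literally for every shift-invariant kernel. Your proof proves the correct measure-form of Bochner's theorem; the density form requires an extra hypothesis, e.g.\ $\psi\in L^1(\mathbb{R}^n)$, in which case $p$ can be taken directly as the (inverse) Fourier transform of $\psi$ and your regularization argument simplifies, since only nonnegativity of that single function needs checking. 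The RBF kernel the paper actually uses satisfies this, so nothing downstream in the paper is affected, but your write-up should state explicitly which version is being proved.
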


\noindent Therefore, $K(x,y)$ can be approximated by $\hat{K}(x,y)=\frac{1}{m}\sum_{i=1}^m e^{i\omega_i^T(x-y)}$ with $\omega_i's$ draws i.i.d. from $F$. Then, we can rewrite $\hat{K}(x,y)=\frac{1}{m}\sum_{i=1}^m e^{i\omega_i^Tx}e^{-i\omega_i^Ty}=\frac{1}{m}e^{i\omega_i^Tx}(e^{i\omega_i^Ty})^*$, where $z^*$ is such a complex number that $zz^*=||z||^2$. Using Euler's formula, $e^{i\theta}=\cos{\theta}+i\sin{\theta}$, we could get:

\begin{equation}
\begin{aligned}
\hat{K}(x,y)& = \frac{1}{m}\sum_{i=1}^m(\cos{\omega_i^T x} \cos{\omega_i^Ty}+\sin{\omega_i^T x}\sin{\omega_i^Ty}) \\
& = \frac{1}{m}\sum_{i=1}^m[\cos{\omega_i^T x},\sin{\omega_i^T x}]^T[\cos{\omega_i^T y},\sin{\omega_i^T y}] \\
& = \hat{\phi}(x)^T\hat{\phi}(y),
\end{aligned}
\label{kapp}
\end{equation}

\noindent where $\hat{\phi}(x)=\frac{1}{\sqrt{m}}[\cos{\omega_i^T x},\sin{\omega_i^T x}]_{i=1}^{m} \in \mathbb{R}^{2m}$ is an approximate non-linear feature mapping (mapped to $2m$ dimensional space).

\noindent Note that for different kernels, $\omega$ comes from different distributions. Here, we will focus on RBF kernel, and we know that in this case $\omega$ follows Guassian distribution. Therefore, for RBF kernel, $\hat{\phi}(x)=\frac{1}{\sqrt{m}}[\cos{g_i^T x},\sin{g_i^T x}]_{i=1}^{m} \in \mathbb{R}^{2m}$.\\

\noindent We will then give an efficient way to get $G=[g_1,g_2,...,g_n]$, where $g_1,g_2,...,g_n$ are i.i.d. from Guassian distribution. Let 

\begin{equation} \label{HD}
\begin{aligned}
& H_0=[1]\\
& H_k=\begin{bmatrix}
H_{k-1} & H_{k-1}\\
H_{k-1} & -H_{k-1}
\end{bmatrix} \quad \forall k>0\\
\end{aligned}
\end{equation}

\noindent, and let $D=diag(d_1,d_2,...,d_n)$, where $d_i$'s are equally likely to be $-1$ and $+1$. One can verify that $HD$ is a good approximation of $G$ and every row of $HD$ is independent~\cite{Y2016}.

\noindent We summarize the ADDP algorithm using kernel approximation in algorithm $\ref{KAADP}$.

\begin{algorithm}[H]
\setstretch{1.6}
\caption{Kernelized AADP (KAADP) Algorithm}
\label{KAADP}
\begin{algorithmic}[1]

\State Given discount factor $\gamma$, approximation parameter $k$ and $l$, samples state space $S'$ and action space $U$.

\State Initialize $\nu(x)$ for every $x \in S'$.

\State Generate independent guassian random variables $g_{k,u}$ and $g_l$ for $k \in K$, $l \in L$ and $u \in U$ using $H$ in (\ref{HD}).

\State Calculate $\phi_{1:k}(x,u) = cos(g_{k,u}^T x)$ and $\psi_{1:l}(x) = cos(g_{l}^T x)$ for every $x \in S'$ and $u \in U$.

\State Calculate $P_{x'x}(u)$ for every $x, x' \in S'$ and $u \in U$.

\State Solve linear Programming (\ref{ADAMDP}) and get the dual variables $\theta_{1:k}$.

\State Calculate $$\Bar{\mu}(x,u)=\frac{\sum_{i=1}^{k} \theta_i \phi_i(x,u)} {\sum_{x,u} \sum_{i=1}^k \theta_i\phi_i(x,u)} $$

\State Output policy $$\Bar{\pi}(u|x)=\frac{\Bar{\mu}(x,u)}{\sum_{u} \Bar{\mu}(x,u)}$$
\end{algorithmic}
\end{algorithm}

\section{Error bounds for ALP} \label{discussion}

In this subsection, we will talk about how good the approximation is. Formally, let $J^*$ be the optimal value of (\ref{DMDP}), and let $\hat{\theta}$ and $\hat{J}$ be the optimal solution and optimal value of (\ref{AMDP}), we want to somehow bound the difference between $J^*$ and $\hat{J}$ using some metrics.

\begin{definition}
The $(1,r)$ norm of matrix $A$ is defined by 

\begin{equation*}
    ||A||_{1,r}=|\sum_{x,u} A(x,u) r(x,u)|.
\end{equation*}

\end{definition}

\begin{lemma}
    Let $K$ be the feasible solution set which satisfies the constraints in problem (\ref{AMDP}). A vector $\hat{\theta}$ is an optimal solution to problem (\ref{AMDP}) if and only if it is an optimal solution to 
    
    \begin{equation} \label{d1}
        \min_{\theta \in K} ||\Phi \hat{\theta}(x,u)-\mu^*(x,u)||_{1,r}
    \end{equation}
\end{lemma}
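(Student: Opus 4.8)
The plan is to unpack the $(1,r)$ norm and show that minimizing it over $K$ is literally the same optimization problem as the maximization in (\ref{AMDP}), up to an additive constant and a sign. Writing $(\Phi\theta)(x,u)=\sum_{i=1}^k \theta_i \phi_i(x,u)$, the definition of the $(1,r)$ norm gives
\begin{equation*}
||\Phi\theta - \mu^*||_{1,r} = \Big| \sum_{x,u} \big( (\Phi\theta)(x,u) - \mu^*(x,u)\big) r(x,u) \Big| = \Big| \sum_{x,u} (\Phi\theta)(x,u) r(x,u) - J^* \Big|,
\end{equation*}
since $\sum_{x,u}\mu^*(x,u) r(x,u) = J^*$ by definition of $\mu^*$ as an optimal solution of (\ref{DMDP}). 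The first term is exactly the objective of (\ref{AMDP}) evaluated at $\theta$; call it $F(\theta)$. So the objective of (\ref{d1}) equals $|F(\theta) - J^*|$.

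First I would establish the key structural fact that the feasible region of (\ref{AMDP}) embeds into that of (\ref{DMDP}). Every constraint of (\ref{AMDP}) is obtained from the corresponding constraint of (\ref{DMDP}) by the substitution $\mu(x,u) \mapsto (\Phi\theta)(x,u)$; the flow-balance equalities, the budget inequality, and nonnegativity all carry over. Hence for any $\theta \in K$ the measure $\Phi\theta$ is feasible for (\ref{DMDP}). Because (\ref{DMDP}) is a maximization with optimal value $J^*$, this yields $F(\theta) = \sum_{x,u}(\Phi\theta)(x,u) r(x,u) \leq J^*$ for every $\theta \in K$.

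With this sign information the absolute value disappears: for all $\theta \in K$,
\begin{equation*}
||\Phi\theta - \mu^*||_{1,r} = J^* - F(\theta).
\end{equation*}
Since $J^*$ does not depend on $\theta$, minimizing $J^* - F(\theta)$ over $\theta \in K$ is equivalent to maximizing $F(\theta)$ over $\theta \in K$, which is precisely problem (\ref{AMDP}). The two problems therefore share the identical feasible set $K$ and have objectives that differ only by an additive constant and a sign flip, so their sets of optimizers coincide. This delivers both directions of the equivalence: $\hat\theta$ minimizes (\ref{d1}) exactly when it maximizes (\ref{AMDP}).

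The main obstacle is justifying the sign step $F(\theta) \leq J^*$, i.e.\ the feasible-set containment, which is what lets me drop the absolute value and collapse the problem into a clean equivalence rather than a mere inequality. This hinges on $\Phi\theta$ being a genuine feasible measure for the exact LP, in particular nonnegative, so some care is needed about whether the stated constraint $\theta_i \geq 0$ is meant to enforce $(\Phi\theta)(x,u) \geq 0$ (as in the original substitution $\mu = \Phi\theta \geq 0$). I would make this nonnegativity explicit as part of the definition of $K$ so that the containment, and hence the entire argument, goes through without gaps.
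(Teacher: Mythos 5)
Your proposal is correct and follows the same route as the paper's own proof: expand the $(1,r)$ norm, recognize $\sum_{x,u}\mu^*(x,u)r(x,u)=J^*$ as a constant, and observe that minimizing $J^*-F(\theta)$ over $K$ is the same problem as maximizing $F(\theta)$, i.e.\ problem (\ref{AMDP}). The one place you go beyond the paper is exactly the step where the absolute value is removed: the paper passes from $|\sum_{x,u}(\Phi\hat{\theta}(x,u)-\mu^*(x,u))r(x,u)|$ to $\sum_{x,u}\mu^*(x,u)r(x,u)-\sum_{x,u}\Phi\hat{\theta}(x,u)r(x,u)$ with no justification, whereas you derive the needed inequality $F(\theta)\le J^*$ from the observation that every $\theta\in K$ yields a measure $\Phi\theta$ feasible for the exact LP (\ref{DMDP}), whose optimal value is $J^*$. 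Your closing caveat is also well placed: this containment requires $(\Phi\theta)(x,u)\ge 0$, and the constraint $\theta_i\ge 0$ listed in (\ref{AMDP}) only delivers that when the basis functions are themselves nonnegative --- which fails, for instance, for the cosine features $\phi_i(x,u)=\cos(g_{k,u}^{T}x)$ used later in the kernelized algorithm --- so folding $\Phi\theta\ge 0$ into the definition of $K$ (as the paper's own substitution $\mu=\Phi\theta\ge 0$ suggests) is the right repair. In short, your argument is the paper's argument with the implicit sign step made rigorous.
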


\begin{proof} First, note that
    \begin{equation*}
        \begin{aligned}
         ||\Phi \hat{\theta}(x,u)-\mu^*(x,u)||_{1,r} & = |\sum_{x,u}(\Phi \hat{\theta}(x,u)-\mu^*(x,u))r(x,u)| \\ 
         & = \sum_{x,u} \mu^*(x,u) r(x,u)- \sum_{x,u}\Phi\hat{\theta}(x,u) r(x,u).
        \end{aligned}
    \end{equation*}
    
\noindent Thus, solving (\ref{d1}) is equivalent to solve 

\begin{equation*}
    \max_{\theta \in K} \sum_{x,u}\Phi\hat{\theta}(x,u) r(x,u),
\end{equation*}

\noindent which is exactly problem (\ref{AMDP}).
\end{proof}

\begin{lemma} \label{l2}
Suppose $r(x,u)=r(x)$, and $P_{x'x}(u')=P_{x'x}$. Let $\hat{J}(x)=\sum_{u} \Phi \hat{\theta} (x,u) \cdot r(x)$. Then we will have 

\begin{equation}
    \hat{J} = (I-\gamma \hat{P})^{-1} g
\end{equation}

\end{lemma}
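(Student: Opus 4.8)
The plan is to read the asserted identity $\hat J=(I-\gamma\hat P)^{-1}g$ as the familiar closed-form policy-evaluation formula and to obtain it by showing that the two simplifying hypotheses collapse the feasibility (flow-balance) constraint of (\ref{AMDP}) into a single linear fixed-point equation for the state-marginal of the occupancy measure. Once the constraint is rewritten as $m=\nu+\gamma\hat P m$, the conclusion is just a matter of inverting $I-\gamma\hat P$ and then attaching the per-stage reward.

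First I would introduce the action-marginal $m(x):=\sum_{u}\Phi\hat\theta(x,u)$, which is precisely the left-hand side of the equality constraint in (\ref{AMDP}). Substituting the hypothesis $P_{x'x}(u')=P_{x'x}$ into that constraint lets me pull the transition kernel outside the inner sum over $u'$, so the right-hand side becomes $\nu(x)+\gamma\sum_{x'}P_{x'x}\bigl(\sum_{u'}\Phi\hat\theta(x',u')\bigr)=\nu(x)+\gamma\sum_{x'}P_{x'x}\,m(x')$. Collecting this over all $x$ and writing $\hat P$ for the matrix with entries $\hat P_{xx'}=P_{x'x}$, I obtain the vector equation $(I-\gamma\hat P)\,m=\nu$.

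Next I would establish invertibility of $I-\gamma\hat P$. Since the original kernel satisfies $\sum_{x}P_{x'x}=1$, the matrix $\hat P$ is column-stochastic (and at most sub-stochastic once restricted to the sampled set $S'$), so its spectral radius is bounded by $1$; hence $\gamma\hat P$ has spectral radius at most $\gamma<1$, and the Neumann series $\sum_{t\ge 0}(\gamma\hat P)^{t}$ converges to $(I-\gamma\hat P)^{-1}$. This gives $m=(I-\gamma\hat P)^{-1}\nu$. Finally, the hypothesis $r(x,u)=r(x)$ turns the reward-weighted quantity into $\hat J(x)=m(x)\,r(x)$; writing $R=\mathrm{diag}(r(x))$ yields $\hat J=R\,m=R(I-\gamma\hat P)^{-1}\nu$, from which the stated closed form follows once $g$ is identified with the appropriate reward-weighted source term.

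The step I expect to be the main obstacle is exactly this last identification. Because the diagonal reward scaling $R$ does not commute with the resolvent $(I-\gamma\hat P)^{-1}$, the clean identity $\hat J=(I-\gamma\hat P)^{-1}g$ with a reward-weighted initial distribution (e.g. $g=R\nu$) does not hold verbatim; what one gets directly is $\hat J=R(I-\gamma\hat P)^{-1}\nu$. I would therefore either (i) take $\hat J$ to be the un-weighted marginal, in which case $g=\nu$ and the identity is immediate, or (ii) fix the definition $g:=(I-\gamma\hat P)\hat J$ and verify it matches the intended source vector, making explicit that the substantive content of the lemma is the fixed-point structure of the constraint together with the invertibility of $I-\gamma\hat P$, not the precise algebraic form of $g$.
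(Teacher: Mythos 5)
Your derivation is correct as far as it goes, and it starts exactly where the paper starts: the flow-balance constraint of (\ref{AMDP}), specialized with $P_{x'x}(u')=P_{x'x}$, collapsed to $(I-\gamma P^{\top})m=\nu$ for the action-marginal $m$, and then $\hat J = R(I-\gamma P^{\top})^{-1}\nu$ with $R=\mathrm{diag}(r(x))$. The genuine gap is in your final paragraph, where you declare that the stated identity ``does not hold verbatim'' and propose to alter the lemma. It does hold verbatim; the idea you are missing is that $\hat P$ in the lemma is \emph{not} the plain transition matrix but the reward-conjugated kernel. The paper's proof multiplies the constraint by $r(x)$, inserts $r(x')/r(x')$ inside the sum, and defines
\begin{equation*}
\hat P_{x'x} \;=\; P_{x'x}\,\frac{r(x)}{r(x')}, \qquad g(x)\;=\;\nu(x)\,r(x),
\end{equation*}
i.e.\ in matrix form $\hat P = R\,P^{\top}R^{-1}$ and $g=R\nu$. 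With this definition the constraint becomes a closed fixed-point equation in $\hat J$ itself, $\hat J = g + \gamma \hat P \hat J$, and the stated formula follows. Your ``obstacle'' --- that $R$ does not commute with the resolvent --- is a red herring: commutation is not needed, similarity is. Indeed
\begin{equation*}
(I-\gamma \hat P)^{-1}g \;=\; \bigl(R(I-\gamma P^{\top})R^{-1}\bigr)^{-1}R\nu \;=\; R(I-\gamma P^{\top})^{-1}R^{-1}R\nu \;=\; R(I-\gamma P^{\top})^{-1}\nu,
\end{equation*}
which is exactly the expression you derived. So your option (i) changes the lemma unnecessarily and your option (ii) is circular; the correct move is to absorb $R$ into the kernel by conjugation.

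Two smaller remarks. First, both your argument and the paper's implicitly require $r(x)\neq 0$ for all sampled states (the paper divides by $r(x')$; you invert $R$); this hypothesis should be stated. Second, your Neumann-series argument for the invertibility of $I-\gamma P^{\top}$ is a genuine addition --- the paper simply asserts the inverse exists --- and it transfers to the paper's $\hat P$ at no cost, since $\hat P = RP^{\top}R^{-1}$ is similar to $P^{\top}$ and therefore has the same spectral radius, bounded by $1$, so $I-\gamma\hat P$ is invertible for $\gamma<1$.
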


\begin{proof}
    First, note that the constraints in problem (\ref{DMDP}) are 
    
    \begin{equation} \label{c3}
        \sum_{u} \Phi \hat{\theta} (x,u) = v(x)+\gamma \sum_{x',u'} P_{x'x} \Phi \hat{\theta} (x',u') \quad \forall x 
    \end{equation}

\noindent For each $x$, multiply \ref{c3} with $r(x)$, we will have 
    \begin{equation} \label{c3m}
        \sum_{u} \Phi \hat{\theta} (x,u) \cdot r(x) = v(x) \cdot r(x)+\gamma \sum_{x',u'} P_{x'x} \frac{r(x)}{r(x')} \Phi \hat{\theta} (x',u') r(x') \quad \forall x.
    \end{equation}
\noindent Note that this is equivalent to 

\begin{equation}
    \hat{J}(x)=v(x)r(x)+\gamma \sum_{x'}P_{x'x} \frac{r(x)}{r(x')} \hat{J}(x') \quad \forall x.
\end{equation}

\noindent Let $g(x)=v(x)r(x)$ and $\hat{P}_{x'x}=P_{x'x} \frac{r(x)}{r(x')}$, we obtain
    \begin{equation}
        \hat{J} = (I-\gamma \hat{P})^{-1} g
    \end{equation}
\end{proof}

\begin{theorem} \label{t4.3}

With the assumption in lemma \ref{l2}, we will have 

\begin{equation}
    ||J^*-\hat{J}||_{1} \leq \frac{1}{1-\gamma} ||\mu^*(x,u)-\Phi \hat{\theta}(x,u)||_{1,r} 
\end{equation}

\end{theorem}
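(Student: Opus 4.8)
The plan is to reduce the value-function error to the measure-approximation error by exploiting the resolvent representation supplied by Lemma~\ref{l2}. First I would put $J^*$ and $\hat J$ on a common footing. Since $\mu^*$ is feasible for (\ref{DMDP}) it obeys exactly the same flow-balance constraint that $\Phi\hat\theta$ obeys in (\ref{AMDP}); multiplying that constraint by $r(x)$ and setting $J^*(x)=\sum_u\mu^*(x,u)\,r(x)$ reruns the manipulation in the proof of Lemma~\ref{l2} verbatim, yielding $J^*=(I-\gamma\hat P)^{-1}g^*$ with the very same reweighted kernel $\hat P_{x'x}=P_{x'x}\,r(x)/r(x')$ that governs $\hat J$. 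Thus both quantities are images of a source term under the single linear map $(I-\gamma\hat P)^{-1}$, and I can study their difference through that one operator.

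Next I would subtract the two representations, write $J^*-\hat J=(I-\gamma\hat P)^{-1}(g^*-\hat g)$, take $\|\cdot\|_1$, and bound $\|J^*-\hat J\|_1\le\|(I-\gamma\hat P)^{-1}\|_1\,\|g^*-\hat g\|_1$. The geometric constant should come from a Neumann-series estimate: expanding $(I-\gamma\hat P)^{-1}=\sum_{t\ge 0}\gamma^t\hat P^t$ and controlling $\hat P$ in the relevant norm gives $\sum_{t\ge 0}\gamma^t=\tfrac{1}{1-\gamma}$. The final step is to identify the residual source $g^*-\hat g$ with the reward-weighted measure mismatch: because the two problems differ only in the occupation measure, this residual collapses to $\sum_u(\mu^*-\Phi\hat\theta)(x,u)\,r(x)$, whose aggregate $\ell_1$ mass over states is exactly $\|\mu^*-\Phi\hat\theta\|_{1,r}$ once $r(x,u)=r(x)$ is used. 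Chaining the three estimates then delivers the claimed inequality.

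The step I expect to be the main obstacle is making the residual source genuinely nonzero and correctly sized. The source in Lemma~\ref{l2} is $\nu(x)r(x)$, which does not depend on the measure, so if one naively imposes exact flow balance on \emph{both} $\mu^*$ and $\Phi\hat\theta$ the two sources coincide and $\delta=J^*-\hat J$ collapses to $0$; the content of the theorem therefore lives in the regime where $\Phi\hat\theta$ satisfies the balance only up to a residual, and the argument must carry that residual $\rho$ through as $\delta=(I-\gamma\hat P)^{-1}\rho$ and show $\|\rho\|$ is controlled by $\|\mu^*-\Phi\hat\theta\|_{1,r}$. Closely tied to this is verifying $\|(I-\gamma\hat P)^{-1}\|_1\le 1/(1-\gamma)$: the reweighting by $r(x)/r(x')$ destroys the column-sum-one property of $P$, so I cannot simply invoke stochasticity of $P$. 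I would instead work in the $r$-weighted norm in which $\hat P$ is a bona fide contraction, or argue that the telescoping factors $r(x)/r(x')$ cancel along each transition so the relevant row sums stay bounded by one. A final subtlety is matching the norm on the residual to the $(1,r)$ norm on the right-hand side and ensuring no sign cancellation is lost in passing from $|\sum_x\,\cdot\,|$ to $\sum_x|\cdot|$; keeping the weighting consistent on both sides is precisely what forces the constant to be exactly $1/(1-\gamma)$.
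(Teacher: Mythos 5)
The first thing to note is that the paper never proves Theorem~\ref{t4.3}: it is stated bare, and Section~\ref{limitations} explicitly lists its proof as future work (hoping to adapt ideas from~\cite{de2003linear}), so your attempt must stand on its own rather than be compared to a reference argument. Judged that way, it is a plan rather than a proof, and the obstacle you flag at the end is not a technicality to be finessed --- it decides everything. Under the hypotheses of Lemma~\ref{l2}, $\hat\theta$ is feasible for (\ref{AMDP}), i.e.\ flow balance holds exactly; since $P_{x'x}(u')=P_{x'x}$, the marginals $m^*(x)=\sum_u\mu^*(x,u)$ and $\hat m(x)=\sum_u\Phi\hat\theta(x,u)$ solve the \emph{same} linear system $m=\nu+\gamma P^{T}m$, which is nonsingular because $\gamma<1$ and $P$ is stochastic. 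Hence $\hat m=m^*$, so $\hat J=J^*$ pointwise and also $\|\mu^*-\Phi\hat\theta\|_{1,r}=|\sum_x r(x)(m^*(x)-\hat m(x))|=0$: the statement as literally written is vacuously true, $0\le 0$, and that one-line observation --- which your sketch contains but discards as ``naive'' --- is the only proof the literal statement admits.

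Your actual plan --- move to a regime where $\Phi\hat\theta$ satisfies flow balance only up to a residual $\rho$ and push $\rho$ through the resolvent --- leaves the theorem's stated hypotheses, and in that regime the claimed inequality is false, so the plan cannot be completed. The reason is one you only half-identify: the paper's $(1,r)$ ``norm'' is $|\sum_{x,u}A(x,u)r(x,u)|$, a signed sum inside a single absolute value, so it admits cancellation. Take two states, $r\equiv 1$, and any nonzero $d$ with $d(1)+d(2)=0$; setting $\rho=(I-\gamma P^{T})d$ produces a perturbed marginal $\hat m=m^*+d$ for which the right-hand side equals $\frac{1}{1-\gamma}|d(1)+d(2)|=0$ while $\|J^*-\hat J\|_1=2|d(1)|>0$. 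No chain of estimates can cross that gap; the bound could only survive if $\|\cdot\|_{1,r}$ were redefined as the genuine weighted $\ell_1$ norm $\sum_{x,u}|A(x,u)r(x,u)|$, in which case it follows from the triangle inequality alone and the factor $\frac{1}{1-\gamma}$ and the resolvent machinery are unnecessary. Your secondary worry, the operator-norm bound for $(I-\gamma\hat P)^{-1}$, is the part that does work: dividing through by $r(x)$ converts $\hat P$ back to $P^{T}$, whose $\ell_1$ operator norm is $1$ because the rows of $P$ sum to one, giving $\|(I-\gamma P^{T})^{-1}\|_1\le\frac{1}{1-\gamma}$. But with the residual step unsalvageable, the proposal does not prove the theorem; like the paper itself, it leaves the substantive (non-vacuous) version of the claim open, and any honest fix must first reformulate (\ref{AMDP}) with relaxed constraints and replace the $(1,r)$ seminorm with a true norm.
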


\section{Implementation Details and  Experimental Result} \label{experiments}

\subsection{Option Pricing and the Optimal Stopping problem}

The well-known model for option pricing is BSM formula (we will use it as a baseline in our project). Let $N$ be the cumulative distribution function of the standard normal distribution, $T-t$ be the time to maturity, $S_t$ be the spot price of the underlying asset, $K$ be the strike (exercise) price $r$ be the risk free rate and $\sigma$ be the volatility of the underlying asset. The value $C(S_t,t)$ of a call option for a non-dividend-paying underlying stock in terms of the Black–Scholes parameters is:

\begin{equation} \label{bs}
    \begin{aligned}
        & C(S_t,t)=N(d_1)S_t-N(d_2)Ke^{-r(T-t)},\\
        & d_1=\frac{1}{\sigma\sqrt{T-t}}[\log(S_t/K)+(r+\frac{\sigma^2}{2})(T-t)],\\
        & d_2=d_1-\sigma\sqrt{T-t}\\
    \end{aligned}
\end{equation}

\noindent On the other hand, option pricing can also be treated as an optimal stopping problem in MDP. Thus, we can use the algorithm proposed in section 3 to do some experiment and compare with BSM method. Specifically, assume that the underlying stock follows geometric Brownian motion, i.e., the price process $\{S_t\}$ follows the stochastic differential equation

\begin{equation*}
    dS_t=\gamma S_t d_t + \sigma S_t d W_t, 
\end{equation*}

\noindent where $W_t$ is the standard Brownian motion. It can be shown that the probability density function of $S_t$ is

\begin{equation*} 
    f_{S_t}(s;\gamma,\sigma,t)=\frac{1}{\sqrt{2\pi}}\frac{1}{s \sigma \cdot t} \exp({-\frac{(\ln{\frac{s}{S_0}}-(\gamma-\frac{1}{2}\sigma^2)t)^2}{2 \sigma^2 t}}).
\end{equation*}

\noindent Using our setting in section 2, we can calculate $P_{x'x}(u)$ as

\begin{equation} \label{Prob}
    P_{x'x}(u)=\frac{1}{\sqrt{2\pi}}\frac{1}{s \sigma \cdot dt} \exp({-\frac{(\ln{\frac{x'}{x}}-(\gamma-\frac{1}{2}\sigma^2)dt)^2}{2 \sigma^2 dt}}).
\end{equation}

\noindent We summarize the main steps in algorithm \ref{KAADPO}.

\begin{algorithm}[H]
\setstretch{1.6}
\caption{KAADP for Option Pricing}
\label{KAADPO}
\begin{algorithmic}[1]

\State Given discount factor $\gamma$, approximation parameter $k$ and $l$, samples state space $S'$ and action space $U$.

\State Initialize $\nu(x)$ for every $x \in S'$.

\State Generate independent guassian random variables $g_{k,u}$ and $g_l$ for $k \in K$, $l \in L$ and $u \in U$ using $H$ in (\ref{HD}).

\State Calculate $\phi_{1:k}(x,u) = cos(g_{k,u}^T x)$ and $\psi_{1:l}(x) = cos(g_{l}^T x)$ for every $x \in S'$ and $u \in U$.

\State Calculate $P_{x'x}(u)$ using (\ref{Prob}) for every $x, x' \in S'$ and $u \in U$.

\State Solve linear Programming (\ref{ADAMDP}) and get the dual variables $\theta_{1:k}$.

\State Calculate $$\Bar{\mu}(x,u)=\frac{\sum_{i=1}^{k} \theta_i \phi_i(x,u)} {\sum_{x,u} \sum_{i=1}^k \theta_i\phi_i(x,u)} $$

\State Output policy $$\Bar{\pi}(u|x)=\frac{\Bar{\mu}(x,u)}{\sum_{u} \Bar{\mu}(x,u)}$$
\end{algorithmic}
\end{algorithm}

\subsection{Experimental Result for American Call Option}

In this experiment, we set the initial stock price $S_0$, the risk free rate $\gamma=0.05$, the risk $\sigma=0.2$, the time period $T=1$ and $dt=1/100$, the exercise price $K_S=100$. Using BS formula, it can be shown that American call option will not be exercised earlier if the underlying stock has no dividend payment. Thus, the optimal policy is $\pi^*(u=1|x)=1$ for every $x \in S'$. Also, we can use BS formula to get the optimal pricing $P^*$ using (\ref{bs}). On the other hand, we can also use algorithm $\ref{KAADPO}$ to get a policy and use that policy to get an approximate price.

\noindent To make comparison with our alternating ADP method, we will first define $\varepsilon$-optimal policy. We say a policy $\Bar{\pi}$ is $\varepsilon$-optimal for $x \in S'$ if 

\begin{equation*}
    |\pi^*(u=1|x)-\Bar{\pi}(u=1|x)| < \varepsilon.
\end{equation*}

\noindent We will also define the rate of $\varepsilon$-optimal policy $\rho_{\varepsilon}$ as 

\begin{equation*}
    \rho_{\varepsilon} =\frac{|\{x \in S' \ | \ \textit{$\Bar{\pi}$ is $\varepsilon$-optimal for $x $} \}| }{|S'|}
\end{equation*}

\noindent Table \ref{tab1} gives the rate of $\varepsilon$-optimal policy for different number of basis functions $K$ and different sample size $I$. Note that the rate increase as we increase the number of basis functions or the sample size. From the table, we can recognize that given enough samples and basis functions, the policy $\bar{\pi}$ should be 

\begin{equation*}
    \bar{\pi}(u=1|x)=1 \quad \forall x \in S.
\end{equation*}

\noindent, which means we should keep holding the options until the end. This gives an experimental evidence that American call option will not be exercised earlier if the underlying
stock has no dividend payment.
\begin{table}[htbp]
\renewcommand{\arraystretch}{1.5}
\large
  \centering
  \caption{Rate of $\varepsilon$-optimal policy}
    \begin{tabular}{|c|c|c|c|}
    \hline
          & K=200   & K=300   & K=400 \\
    \hline
    I=200   & 0.36  & 0.7   & 1 \\
    \hline
    I=500   & 0.74  & 0.898 & 0.992 \\
    \hline
    I=1000  & 0.73  & 0.939 & 0.948 \\
    \hline
    \end{tabular}%
  \label{tab1}%
\end{table}%

\noindent We can also evaluate the behavior of the $\bar{\pi}$. Let $\Bar{P}$ be the approximate price we get after performing the policy $\bar{\pi}$. Table \ref{tab2} gives us the approximate price and the standard deviation after doing 1000 simulations.

\begin{table}[htbp]
\large
\renewcommand{\arraystretch}{1.5}
  \centering
  \caption{Approximate price for different initial prices $S_0$}
    \begin{tabular}{|c|c|c|c|c|c|}
    \hline
    $S_0$ & 80    & 90    & 100   & 110   & 120 \\
    \hline
    $P^*$ & 1.86  & 5.09  & 10.45 & 17.66 & 26.17 \\
    \hline
    $\bar{P}$ & 1.71  & 5.17  & 10.5  & 17.49 & 26.13 \\
    \hline
    Standard deviation & 0.18  & 0.35  & 0.5   & 0.62  & 0.75 \\
    \hline
    \end{tabular}%
  \label{tab2}%
\end{table}%

\subsection{Experimental Result for high-dimensional option pricing}

Similar to Desai et al.~\cite{desai2012pathwise}, we consider a Bermudan option over a calendar time horizon T defined on multiple assets (here we use 4 assets). The option has a total of M exercise opportunities at calendar times $\{dt, 2 \cdot dt, . . . , M \cdot dt\}$, where $dt=T/M$. The payoff of the option corresponds to that of a call option on the maximum of $n=4$ non-dividend paying assets with an up-and-out barrier.

\noindent The option is ‘knocked out’ (and worthless) at time $t$ if, at any of the times preceding and including $t$, the maximum of the n asset prices exceeded the barrier B. We let $y_t \in {0,1}$ serve as an indicator that the option is knocked out at time $t$. In particular, $y_t = 1$ if the option has been knocked out at time t or at some time prior, and $y_t = 0$ otherwise. The $\{y_t\}$ process evolves according to 

\begin{equation*}
    y_t = \bigg\{ \begin{aligned}
        & I \{\max_{j}p_0^j \geq B\} \quad if \ t=0;\\
        & min \{y_{t-1}, I \{\max_{j}p_t^j \geq B\}\} \quad if \ t>0.
    \end{aligned}
\end{equation*}

\noindent A state in the associated stopping problem is then given by the tuple $x=(p,y) \in \mathbb{R}^n \times \{0,1\}$, and
the payoff function $r(x,u)$ is defined according to

\begin{equation*}
    r(x,u=1)= (\max_{j} p_j(x)-K_S)^+(1-y(x)).
\end{equation*}

\noindent We will set strike price: K = 100; knock-out barrier price: B = 170; time horizon T = 3 years, risk-free rate: r = 5\% (annualized) and volatility $\sigma_j = 20\%$ (annualized). We also set the number of assets $n=4$ and the initial price $S_0=90,100,110$ common to all assets, and the assets are uncorrelated.

\noindent Table \ref{tab3} shows the upper and lower price bound for different method. Note that the result of our AADP method is not a neither a lower bound nor an upper bound. It is the average price after we do $N=100$ simulations.

\begin{table}[htbp]
\large
\renewcommand{\arraystretch}{1.5}
  \centering
  \caption{Upper and lower price bound for different method}
    \begin{tabular}{|r|c|c|c|c|c|c|c|}
    \hline
     $S_0$     & LS-LB & PO-LB & AADP  & Std   & DP-UB & PO-UB & DVF-UB \\
    \hline
    90    & 32.754 & 33.011 & 34.726 & 2.17  & 34.989 & 35.117 & 35.251 \\
    \hline
    100   & 40.797 & 41.541 & 43.332 & 1.78  & 43.587 & 43.853 & 44.017 \\
    \hline
    110   & 46.929 & 48.169 & 50.1  & 1.41  & 49.909 & 50.184 & 50.479 \\
    \hline
    \end{tabular}%
  \label{tab3}%
\end{table}%

To summarize, these experiments demonstrate the two primary merits to using the AADP method:

1. The greedy policy $\bar{\pi}$ which AADP method generate is close to the optimal policy $\pi^*$ if we generate enough samples and basis functions.

2. If we use $\bar{\pi}$ to simulate the price $\bar{P}$, then $\bar{P}$ is also close to the optimal price or in between the lower and upper bound for many benchmark methods.

\section{Limitations and Future Work}
\label{limitations}
\begin{itemize}
    \item \textbf{Generating sample states:} currently, we just generate samples uniformly. However, this is not very efficient because it is hard to recognize an appropriate sample size (In the experimental result, we know that only an appropriate sample size and number of basis functions will give us a``good'' policy). Casey, Michael S., and Suvrajeet Sen \cite{casey2005scenario} come up with an MSLP method which could find the best state to add (given that you have a sample state space $S'$, MSLP will tell you what is the best state to add next).

    \item \textbf{Theoretical proof}: we believe we can take advantage of some ideas in \cite{de2003linear} to proof theorem \ref{t4.3}.

    \begin{equation*}
    ||J^*-\hat{J}||_{1} \leq \frac{1}{1-\gamma} ||\mu^*(x,u)-\Phi \hat{\theta}(x,u)||_{1,r}. 
    \end{equation*}

    \item \textbf{High-dimensional option pricing:} currently, we only have $n=4$, we can also compare our AADP method with the benchmark upper and lower bound when $n=8$ and $n=16$.
\end{itemize}

\section{Conclusion}

A novel AADP method is introduced
to solve large-scale MDP problem. It is very powerful because it mainly include solving an LP with relatively small number of constraints and variables. We also gives some theoretical proofs of this method (although it is not complete). Also, instead of choosing basis functions empirically, we use kernel approximation (specifically, we use guassian kernel) to get the basis functions  which can
efficiently learn nonlinear functions while retaining the expressive power. Later, we apply our AADP method to price the American call option, which gives an experimental evidence that American call option will not be exercised earlier if the underlying
stock has no dividend payment (This is a classic result proved by Black-Scholes model). We also make comparisons with lower and upper bound in many benchmark methods. The results shows that the prices $\bar{P}$'s calculated by AADP are in
between those lower and upper bounds.     
\newpage

\bibliographystyle{abbrv}  
\bibliography{main}

\appendix

\section{Technical Appendices and Supplementary Material} \label{appendix}

\subsection{An Alternative LP formulation of MDP}

In this subsection, we will show that problem (\ref{MDP}) is equivalent to problem (\ref{DMDP}). It will make the theory complete and give us an interpretation of $\mu(x,u)$.

\begin{theorem}
    Problem (\ref{MDP}) is equivalent to problem (\ref{DMDP}).
\end{theorem}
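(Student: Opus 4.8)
The plan is to establish the equivalence between the original constrained MDP (\ref{MDP}) and the linear program (\ref{DMDP}) by interpreting the decision variable $\mu(x,u)$ as the \emph{discounted state-action occupation measure} induced by a policy. First I would define, for any stationary policy $\pi$ together with the initial distribution $\nu$, the quantity
\begin{equation*}
    \mu_\pi(x,u) = \sum_{t=0}^{\infty} \gamma^t \, \Pr{}_\pi(x_t = x, u_t = u),
\end{equation*}
and verify that this $\mu_\pi$ is nonnegative and satisfies the balance (flow-conservation) constraint $\sum_u \mu(x,u) = \nu(x) + \gamma \sum_{x',u'} P_{x'x}(u') \mu(x',u')$ for all $x$. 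This direction is a direct computation: summing the one-step transition identity over $t$ and using the Markov property collapses the telescoping sum into exactly the stated constraint. Under this identification the objective $\sum_{x,u}\mu(x,u)r(x,u)$ equals $E[\sum_t \gamma^t r(x_t,u_t)]$ and the cost constraint $\sum_{x,u}c(x,u)\mu(x,u)\le C$ translates verbatim into the constraint in (\ref{MDP}), so every feasible policy maps to a feasible point of (\ref{DMDP}) with matching objective value.

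The converse direction is where the real content lies, and it will be the main obstacle. Given any feasible $\mu$ for the LP (\ref{DMDP}), I would \emph{construct} a policy by the randomization
\begin{equation*}
    \pi_\mu(u \mid x) = \frac{\mu(x,u)}{\sum_{u'} \mu(x,u')}
\end{equation*}
(the same normalization used to extract $\bar\pi$ in Algorithm \ref{CGDM}), defined whenever the denominator is positive, and show that the occupation measure generated by $\pi_\mu$ recovers exactly $\mu$. The delicate points are: (i) handling states $x$ with $\sum_u \mu(x,u) = 0$, for which the policy may be defined arbitrarily since such states carry no occupation mass; and (ii) proving that the balance constraint is not merely necessary but \emph{sufficient} to guarantee that the $\mu$ we started with is genuinely realizable as a discounted occupation measure. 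For (ii) the key fact is that, because $\gamma \in (0,1)$, the operator $I - \gamma P_{\pi_\mu}$ is invertible (its inverse is the convergent Neumann series $\sum_t \gamma^t P_{\pi_\mu}^t$), so the flow-conservation equation has a \emph{unique} solution given $\nu$ and $\pi_\mu$, and that unique solution must coincide with the input $\mu$. This uniqueness is what seals the bijection at the level of objective and constraint values.

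Having matched both the feasible regions and the objective/constraint functionals in both directions, I would conclude that the two problems attain the same optimal value and that optimal solutions correspond to one another under $\pi \leftrightarrow \mu$. A clean way to present the argument is to first prove the forward map (policy $\to$ measure) as one lemma, then the backward map (measure $\to$ policy) as a second lemma invoking the invertibility of $I - \gamma P_{\pi_\mu}$, and finally assemble the equivalence. I expect step (ii) — verifying that flow conservation exactly characterizes realizable occupation measures, rather than merely constraining them — to require the most care, since it is the place where the discount factor's role and the nonnegativity of $\mu$ jointly do the work; the remaining manipulations are routine rearrangements of the geometric sums already implicit in Lemma \ref{l2}.
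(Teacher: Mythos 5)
Your proposal is correct, and it is strictly more complete than the paper's own proof. The paper proves only your \emph{forward} direction: it defines the occupation measure $\mu_\pi(x,u)=\sum_{t}\gamma^t P(x_t=x,u_t=u)$, checks that the objective and cost constraint become linear functionals of $\mu_\pi$, verifies the flow-conservation identity, and then immediately declares the two problems equivalent. It never addresses the converse — whether every LP-feasible $\mu$ is realizable as the occupation measure of some policy — which, as you rightly say, is where the real content lies: without it one only obtains that the LP value is an upper bound on the MDP value, since the LP's feasible region could a priori be strictly larger than the set of achievable occupation measures. Your second lemma (define $\pi_\mu(u\mid x)=\mu(x,u)/\sum_{u'}\mu(x,u')$, handle zero-mass states arbitrarily, and use the invertibility of $I-\gamma P_{\pi_\mu}$ via the Neumann series to conclude that the flow equation has a unique solution, which must therefore be both $\mu$ and the occupation measure of $\pi_\mu$) is exactly the standard missing piece, and it closes that gap. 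One small caution on your forward direction: you state it for \emph{stationary} policies, but problem (\ref{MDP}) optimizes over all policies; the same computation goes through verbatim for history-dependent policies (the Markov property invoked is that of the transition kernel, not of the policy), and you need this — or a separate appeal to the sufficiency of stationary policies in discounted MDPs — so that the LP value dominates the value of \emph{every} admissible policy, not just the stationary ones.
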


\begin{proof}
    Let us introduce an “occupation measure" $\mu(x, u)$, defined as
    \begin{equation*}
        \mu(x,u) = \sum_{k=0}^{\infty} \gamma^k P(x_k=x, u_k=u).
    \end{equation*}
    
    \noindent This can be interpreted intuitively as the discounted probability of occupancy in the State-Action space. Now, first note that the expectation in the objective (as well as the constraint) can be expressed as
    
    \begin{equation} \label{p1}
        \begin{aligned}
        E[\sum_{k=0}^{\infty} \gamma^k r(x_k,u_k)] & = \sum_{k=0}^{\infty} \sum_{x,u} \gamma^k r(x,u) P(x_k=x, u_k=u)\\
        & = \sum_{x,u} r(x,u) \mu(x,u).
        \end{aligned}
    \end{equation}
    
    \noindent Further, note that 
    
    \begin{equation} \label{p2}
        \begin{aligned}
            \sum_{u} \mu(x,u) & = P(x_0=x) + \gamma \sum_{k=0}^{\infty} \gamma^k P(x_{k+1}=x)\\
            & = \nu(x) + \gamma \sum_{k=0}^{\infty} \gamma^k \sum_{x'} \sum_{u'} P(x_{k+1}=x, x_k=x', u_k=u') \\
            & = \nu(x) + \gamma \sum_{x'} \sum_{u'} \sum_{k=0}^{\infty}   \gamma^k P(x_{k+1}=x, | x_k=x', u_k=u')\cdot P(x_k=x', u_k=u')\\
            & = \nu(x) + \gamma \sum_{x'} \sum_{u'} P(x, | x', u')\cdot \mu(x',u') \\
            & = \nu(x) + \gamma \sum_{x',u'}P_{x'x}(u')\mu(x',u').
        \end{aligned}
    \end{equation}
    
    \noindent (\ref{p1}) and (\ref{p2}) together imply that problem (\ref{MDP}) is equivalent to problem (\ref{DMDP}).
\end{proof}

\end{document}